\newcommand{\R}{{\mathbb R}}
\numberwithin{equation}{section}
\newtheorem{theorem}{Theorem}[section]
\newtheorem{proposition}[theorem]{Proposition}
\newtheorem{lemma}[theorem]{Lemma}
\newtheorem{remark}[theorem]{Remark}
\theoremstyle{definition}
\renewcommand{\dfrac}{\displaystyle\frac}
\newcommand{\brm}{\begin{remark}\rm}
\newcommand{\erm}{\end{remark}}
\newcommand{\brms}{\begin{remark}\rm}
\newcommand{\erms}{\end{remark}}
\newcommand{\bte}{\begin{theorem}}
\newcommand{\ete}{\end{theorem}}
\newcommand{\bpr}{\begin{proposition}}
\newcommand{\epr}{\end{proposition}}
\newcommand{\ble}{\begin{lemma}}
\newcommand{\ele}{\end{lemma}}
\newcommand{\beq}{\begin{equation}}
\newcommand{\eeq}{\end{equation}}
\newcommand{\bdm}{\begin{displaymath}}
\newcommand{\edm}{\end{displaymath}}
\numberwithin{equation}{section}
\newcommand{\bos}{\begin{remark}\rm}
\newcommand{\eos}{\end{remark}}
\newcommand{\ben}{\begin{enumerate}}
\newcommand{\een}{\end{enumerate}}
\newcommand{\be}{\begin{equation}}
\newcommand{\ee}{\end{equation}}
\title[Classification of the solutions]{Classification of positive $\mathcal {D}^{1,p}(\R^N)$-solutions to the critical $p$-Laplace equation in $\mathbb{R}^N$}
\author[B.\ Sciunzi]{Berardino Sciunzi}
\thanks{\it 2010 Mathematics Subject
 Classification: 35J92,35B33,35B06}
\thanks{Dipartimento di Matematica e Informatica,
Universit\`a della Calabria,
Ponte Pietro Bucci 31B, I-87036 Arcavacata di Rende, Cosenza, Italy,
E-mail: {\em sciunzi@mat.unical.it}}
\thanks{Partially supported by ERC-2011-grant: \emph{Elliptic PDE's and symmetry of interfaces and layers for odd nonlinearities.}}
\thanks{Partially supported by PRIN-2011: {\em Variational and Topological Methods in the Study of Nonlinear Phenomena}}
\begin{document}

\begin{abstract}
We provide the classification of the positive solutions to $-\Delta_p u =u^{p^*-1}$ in  $\mathcal {D}^{1,p}(\R^N)$ in the case $2<p<N$. Since the case $1<p\leq2$ is already known this provides the complete classification for $1<p<N$.
\end{abstract}

\maketitle

\tableofcontents

\medskip
\section{Introduction}\label{introdue}
We consider in the whole space the critical problem
\begin{equation}\nonumber
\mathcal{P}^*\,:=\qquad
\begin{cases}
-\Delta_p u =u^{p^*-1}\quad \,\,\text{in }\,\,\mathbb{R}^N\\
\qquad\, u >0 \, \qquad \quad\text{in }\,\,\mathbb{R}^N \\
\qquad\, u\in \mathcal {D}^{1,p}(\mathbb{R}^N)
\end{cases}
\end{equation}
where $1< p<N$,  $p^*=\frac{Np}{N-p}$ is the critical exponent for the Sobolev embedding  and
\[
\mathcal {D}^{1,p}(\R^N) = \Big\{u\in L^{p^*}(\mathbb{R}^N) \,:\, \int_{\mathbb{R}^N}|\nabla u|^p <\infty\Big\}\,.
\]
 Let us recall that
any solution $u\in \mathcal {D}^{1,p}(\R^N)$ of  $\mathcal{P}^*$ belongs to  $ L^\infty(\mathbb{R}^N)$ as it follows by  \cite{moser,localser,trudi}.
 Consequently we have that $u$ is locally of class  $C^{1,\alpha}$ by  $C^{1,\alpha}$ estimates (see \cite{Di,mingione,Li,Te,T}).\\

\noindent We deal with  the classification of the solutions to $\mathcal{P}^*$. It is well known that such issue is crucial in many applications such as a-priori estimates,
blow up analysis and asymptotic analysis.
An explicit family of solutions to $\mathcal P^*$ is given by
\begin{equation}\label{classificationveron}
U_{\lambda,x_0}:=\left[\frac{\lambda^{\frac{1}{p-1}}(N^{\frac{1}{p}}(\frac{N-p}{p-1})^{\frac{p-1}{p}})
}{
\lambda^{\frac{p}{p-1}}+|x-x_0|^{\frac{p}{p-1}}
}\right]^{\frac{N-p}{p}}\quad \lambda>0\qquad x_0\in\mathbb{R}^N.
\end{equation}
Note that, by \cite{Ta}, it follows that the family of functions given by \eqref{classificationveron} are minimizers
to
\begin{equation}\label{SSS}
S:=\,\min_{\underset{\varphi\neq 0}{\varphi\in \mathcal {D}^{1,p}(\R^N)} }\dfrac{\int_{\mathbb{R}^N}|\nabla\varphi|^pdx}{\left(\int_{\mathbb{R}^N}\varphi^{p^*}dx\right)^{\frac{p}{p^*}}}\,.
\end{equation}
By the  classification results in \cite{ClassGV}(see also \cite{bidaut})  it follows that all the regular \emph{radial} solutions to $\mathcal{P}^*$  are given by
\eqref{classificationveron}. \\

\noindent In the semilinear case $p=2$ it has been proved in the celebrated paper  \cite{CGS} (see also \cite{chen}) that any solution to $-\Delta u=u^\frac{N+2}{N-2}$ ($N\geq 3$) is radial and hence classified by \eqref{classificationveron}. It is crucial in the proof the use of the Kelvin transform that allows to reduce to the study of  the symmetry of  solutions that have nice decaying properties at infinity. Previous results were proved in \cite{GNN2} via the \emph{Moving Plane Method} under additional conditions, see \cite{S} and  \cite{GNN}.\\

\noindent In the quasilinear case $p\neq 2$ the problem is more difficult and we have to take into account the nonlinear nature of the $p$-Laplace operator, the lack of regularity of the solutions and the fact that comparison principles are not equivalent to maximum principles in this case. Furthermore
a Kelvin type transform is not available. The first result has been recently obtained in \cite{AdvS} where it was considered the case $1<p<2$ when the nonlinearity is locally Lipschitz continuous, namely $p^*\geq2$. This requires that $\frac{2N}{N+2}\leq p<2$. The result has been extended to the case $1<p<2$ in \cite{VETOIS} exploiting a fine analysis of the behaviour of the solutions at infinity
that allows to exploit the moving plane method as developed in
 \cite{DPR,DR}(see also \cite{Szou}).\\

\noindent In this paper we completely set up the problem proving that the solutions in \eqref{classificationveron} are the only solutions to $\mathcal P^*$ (for $1<p<N$). We explicitly state the result and, for the reader's convenience, we include the singular case $1<p<2$ that is already known as remarked above.
\begin{theorem}\label{radialtheorem}
Let $1<p<N$ and let $u$ be a solution to $\mathcal P^*$. Then there exist $\lambda>0$ and $x_0\in\mathbb{R}^N$ such that:
\begin{equation}\nonumber
u(x)=
U_{\lambda,x_0}(x):=\left[\frac{\lambda^{\frac{1}{p-1}}(N^{\frac{1}{p}}(\frac{N-p}{p-1})^{\frac{p-1}{p}})
}{
\lambda^{\frac{p}{p-1}}+|x-x_0|^{\frac{p}{p-1}}
}\right]^{\frac{N-p}{p}}\,.
\end{equation}
\end{theorem}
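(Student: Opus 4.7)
The case $1<p\le 2$ is already treated in \cite{AdvS,VETOIS}, so the plan is to adapt the \emph{moving plane method} to the degenerate range $2<p<N$. The general strategy is classical: show that $u$ is symmetric with respect to every hyperplane through some point $x_0$, hence radially symmetric about $x_0$, and then invoke the radial classification of \cite{ClassGV} to identify $u$ with the Aubin--Talenti profile $U_{\lambda,x_0}$. The two ingredients that must be set up carefully are the \emph{sharp asymptotic behavior} of $u$ and $|\nabla u|$ at infinity, which provides the initial comparison needed to start the procedure, and a \emph{weak comparison principle} for $-\Delta_p$ on unbounded half-space-like domains that is strong enough to survive the degeneracy of the operator at the critical set $\{\nabla u = 0\}$.

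First I would establish, for $2<p<N$, the decay estimates
\[
u(x)\asymp |x|^{-\frac{N-p}{p-1}},\qquad |\nabla u(x)|\asymp |x|^{-\frac{N-1}{p-1}}\qquad \text{as }|x|\to\infty,
\]
by combining the a priori $L^\infty$ bound recalled after $\mathcal P^*$ with a Moser/Serrin-type iteration and the $C^{1,\alpha}$ estimates \cite{Di,T}, together with a Kelvin-type integral rescaling argument (since no pointwise Kelvin transform is available for $p\neq 2$). These rates match those of the model family $U_{\lambda,x_0}$ and are precisely what is needed so that, for a reflected function $u_\lambda(x):=u(R_\lambda x)$ with respect to a hyperplane $T_\lambda=\{x_1=\lambda\}$, the difference $u_\lambda-u$ decays fast enough on $\Sigma_\lambda:=\{x_1<\lambda\}$ to be a legitimate test function in the truncated weak formulation.

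Next I would run the moving plane procedure along the $x_1$-direction. The starting step is to show that for $\lambda\ll 0$ one has $u\le u_\lambda$ in $\Sigma_\lambda$: here the decay of $u$ and $|\nabla u|$, coupled with the weighted Sobolev/Hardy inequalities and the weak comparison principles developed in \cite{DPR,DR,Szou} (which handle the degeneracy of $-\Delta_p$ on $\{\nabla u=0\}$ via a suitable weight $\rho=|\nabla u|^{p-2}+|\nabla u_\lambda|^{p-2}$), allows us to test with $(u-u_\lambda)^+$ on $\Sigma_\lambda$ and absorb the critical nonlinearity by the smallness of the tail integral. Then one defines $\bar\lambda=\sup\{\lambda : u\le u_\mu\text{ in }\Sigma_\mu\text{ for all }\mu\le\lambda\}$ and shows $\bar\lambda<+\infty$ leads, via the strong comparison principle and a contradiction argument using a further small translation of planes, to $u\equiv u_{\bar\lambda}$; otherwise $\bar\lambda=+\infty$ together with the same argument in the opposite direction forces symmetry with respect to some $T_{\lambda_0}$.

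Repeating this in every direction yields radial symmetry about a common point $x_0$, and the radial ODE classification of \cite{ClassGV} identifies $u$ with some $U_{\lambda,x_0}$. I expect the main technical obstacle to be the \textbf{sharp asymptotic estimate on $|\nabla u|$} in the degenerate regime $p>2$: without the Kelvin transform one must extract it directly from the equation and the decay of $u$, and it is precisely this rate that controls all the weighted integrals appearing in the comparison principle on $\Sigma_\lambda$ and justifies the truncation that replaces the missing Kelvin compactification. Once this estimate is in hand, the moving plane iteration proceeds along the lines of \cite{AdvS,VETOIS} with only the weight and the critical set handled differently to account for $p>2$.
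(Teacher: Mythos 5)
Your outline matches the paper's structure at the top level: sharp decay estimates for $u$ and $|\nabla u|$, then moving planes with Hardy and weighted Poincar\'e inequalities absorbing the degenerate weight, then the radial classification of \cite{ClassGV}. You also correctly flag that the sharp bound on $|\nabla u|$ is the main obstacle when $p>2$, since the weight $(|\nabla u|+|\nabla u_\lambda|)^{p-2}$ must be bounded \emph{below} near infinity for the absorption to close.

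However, there is a genuine gap in how you propose to get that bound. Moser/Serrin iteration and $C^{1,\alpha}$ theory give $L^\infty$ decay and gradient \emph{upper} bounds (and these, together with the matching two-sided decay of $u$ itself, are already available from \cite{VETOIS}, which you should use rather than rederive); none of these tools produce a pointwise \emph{lower} bound $|\nabla u(x)|\geq \tilde C |x|^{-\frac{N-1}{p-1}}$, which is what is actually needed to dominate $|\nabla u|^{2-p}$ when $p>2$. The mechanism the paper uses is a blow-down argument: suppose the lower bound fails along $|x_n|=R_n\to\infty$, set $w_{R_n}(x)=R_n^{\frac{N-p}{p-1}}u(R_n x)$, use the \cite{VETOIS} two-sided bounds and interior $C^{1,\alpha}$ estimates to extract a $C^{1,\alpha'}_{loc}(\mathbb{R}^N\setminus\{0\})$ limit $w_\infty$, observe that $w_\infty$ is positive, $p$-harmonic in $\mathbb{R}^N\setminus\{0\}$, tends to $0$ at infinity and to $\infty$ at the origin, and then invoke a Liouville-type classification (Theorem \ref{thmarmonic}, proved by moving planes on the punctured space) to conclude $w_\infty=\alpha|x|^{-\frac{N-p}{p-1}}$, whose gradient never vanishes --- contradicting $\nabla w_{R_n}(x_n/R_n)\to 0$. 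Your ``Kelvin-type integral rescaling'' gestures at something like this but does not supply the limiting-profile classification, which is the actual new content; without it the lower bound, and hence the absorption step in the moving plane inequality, is not justified for $p>2$. A secondary, smaller imprecision: the weighted Poincar\'e inequality invoked in Step 2 of the moving plane is the one from \cite{DS1} with weight $|\nabla u|^{p-2}$, combined with the elementary inequality $|\nabla u|^{p-2}\le (|\nabla u|+|\nabla u_\lambda|)^{p-2}$ valid only because $p>2$; citing \cite{DPR,DR,Szou} alone does not pin down the correct form of the inequality in this degenerate range.
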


The proof of Theorem \ref{radialtheorem} follows once we show that all the solutions are radial (and radially decreasing). This will be the core of the paper. For the reader's convenience we provide an outline of the proofs:\\

\begin{itemize}
\item[i)] In Section \ref{section2} (see Theorem \ref{cjvbmcnvbmvbmvbkghkg} ) we provide a lower bound on the decay rate of $|\nabla u|$,  namely
 we show that
$
 |\nabla u (x)|\geq     \tilde C|x|^{-\frac{N-1}{p-1}}
$
for some constant $\tilde C$,
 in $ \mathbb{R}^N\setminus\{B_{R_0}\}$ (for some $R_0>0$). This is the counterpart to the upper bound obtained in \cite{VETOIS}.
 We use here a new technique based on the study of the limiting profile at infinity and we also take advantage of the a priori estimates proved in \cite{VETOIS} .\\
 
\item[ii)] In Section \ref{section3}, see
Theorem \ref{hfgdhgdhghfghdkgfvks}, we show that the moving plane technique can be carried out  exploiting $i)$, Hardy's inequality and the weighted Poincar\'{e} type inequality obtained in \cite{DS1}.\\

\item[ii)] We conclude proving Theorem \ref{radialtheorem} exploiting Theorem \ref{hfgdhgdhghfghdkgfvks} and the classification of the radial solutions in  \cite{ClassGV}.
\end{itemize}

\section{Preliminary results and decay estimates}\label{section2}
 In the following we will frequently use standard elliptic estimates. Let us recall in particular that,
for any $  p>1 $,   there exists a positive constant $C_1$,  depending on $p$, such that  $\; \forall \, \eta, \eta' \in  \mathbb{R}^{N}$
\begin{eqnarray}\label{eq:inequalities}
[|\eta|^{p-2}\eta-|\eta'|^{p-2}\eta'][\eta- \eta'] \geq C_1 (|\eta|+|\eta'|)^{p-2}|\eta-\eta'|^2\,. \\ \nonumber
\end{eqnarray}
A key tool in our proofs is the moving plane technique. To exploit it we need the following notation.
We will study the symmetry of the solutions in the $\nu$-direction for any $\nu\in S^{N-1}$. Anyway, since the problem is invariant up to rotations, we fix
$\nu=e^1=(1,0,\ldots ,0)$ and   we set
\begin{equation}\nonumber
T_\lambda=\{x\in \mathbb{R}^N\,:\,x_1=\lambda\}
\end{equation}
\begin{equation}\nonumber
\Sigma_\lambda=\{x\in \mathbb{R}^N\,:\,x_1 <\lambda\}
\end{equation}
\begin{equation}\nonumber
x_\lambda=R_\lambda^\nu(x)=x+2(\lambda -x\cdot e^1)e^1=(2\lambda - x_1,x'),\qquad x'\in\mathbb{R}^{N-1}\,,
\end{equation}
i.e. $R_\lambda $  is the reflection trough the hyperplane $T_\lambda$. Furthermore we set
\begin{equation}\nonumber
u_{\lambda }(x)=u(x_\lambda) \,.
\end{equation}
Finally we define
\begin{equation}\nonumber
\Lambda=\{\lambda\in \mathbb{R}\,:\, u\leq u_{\mu}\,\, \text{in}\,\, \Sigma_\mu,\,\, \forall \mu\leq\lambda \}.
\end{equation}
 and, if $\Lambda \neq \emptyset$, we set
\begin{equation}\label{eq:suplambda}
 \lambda _0  =\sup \Lambda.
\end{equation}
In all the paper, we use the notation
$
B_R=B_R(0)
$
to indicate the ball of radius $R$ centered at the origin.\\

Now we state a simple result, that will be used afterwards, regarding the uniqueness up to multipliers of $p$-harmonic maps in $\mathbb{R}^N\setminus\{0\}$, under suitable conditions at zero and at infinity. For the reader's convenience we add a very simple proof.

\begin{theorem}\label{thmarmonic}
Let $v\in C^{1,\alpha}_{loc}(\mathbb{R}^N\setminus\{0\})$ be $p$-harmonic in $\mathbb{R}^N\setminus\{0\}$ and assume that
\begin{equation}\label{limarm}
\underset{|x|\rightarrow \infty}{\lim}\, v(x)=0\qquad\text{and}\qquad\underset{|x|\rightarrow 0}{\lim}\, v(x)=\infty\,.
\end{equation}
Then $v$ is the fundamental solution, namely:
\begin{equation}
v(x)\,=\, \frac{\alpha}{|x|^{\frac{N-p}{p-1}}}\qquad \text{for some}\,\,\alpha\in\mathbb{R}^+\,.
\end{equation}
\end{theorem}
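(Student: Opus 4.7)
The plan is to identify the constant $\alpha := \sup_{x\neq 0} v(x)/\Phi(x)$, where $\Phi(x) = |x|^{-(N-p)/(p-1)}$, and then upgrade the inequality $v\leq \alpha\Phi$ to an identity via a blow-up at the origin together with the strong comparison principle for the $p$-Laplacian.

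First, the strong maximum/minimum principle for $p$-harmonic functions applied on annuli $B_R\setminus\overline{B_r}$, together with the boundary behaviour $v\to\infty$ at $0$ and $v\to 0$ at $\infty$, gives $v>0$ in $\R^N\setminus\{0\}$. Set $M(r)=\max_{|x|=r}v$ and $m(r)=\min_{|x|=r}v$. Harnack's inequality applied on balls $B_{|x|/2}(x)$ yields $M(r)\leq C_1 m(r)$ with $C_1=C_1(N,p)$. Comparison of $v$ on each annulus $B_R\setminus\overline{B_r}$ with the explicit radial $p$-harmonic profile $a+b\rho^{-(N-p)/(p-1)}$ matched to $v$'s boundary extrema, followed by sending $R\to\infty$ (so that the additive constant $a$ drops out, using $v\to 0$ at infinity), shows that $r\mapsto r^{(N-p)/(p-1)}M(r)$ is non-increasing and $r\mapsto r^{(N-p)/(p-1)}m(r)$ is non-decreasing. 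Combining these with the Harnack ratio yields the global two-sided estimate $c\,\Phi\leq v\leq C\,\Phi$ in $\R^N\setminus\{0\}$ for some positive constants $c,C$, so $\alpha\in(0,\infty)$ and $r^{(N-p)/(p-1)}M(r)\nearrow\alpha$ as $r\to 0^+$.

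Next I produce an interior contact point between the eventual limiting profile and $\alpha\Phi$ via blow-up. Choose $\lambda_n\to 0^+$ and $y_n$ with $|y_n|=\lambda_n$, $v(y_n)=M(\lambda_n)$, and write $y_n=\lambda_n\theta_n$ with $\theta_n\to\theta_\star\in S^{N-1}$ along a subsequence. The rescalings $v_n(x) := \lambda_n^{(N-p)/(p-1)}\,v(\lambda_n x)$ are again $p$-harmonic on $\R^N\setminus\{0\}$ and inherit $c\Phi\leq v_n\leq C\Phi$, so by the interior $C^{1,\alpha}$ estimates a further subsequence converges locally in $C^{1,\alpha}(\R^N\setminus\{0\})$ to a positive $p$-harmonic function $v_\infty\leq\alpha\Phi$. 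Since $v_n(\theta_n)=\lambda_n^{(N-p)/(p-1)}M(\lambda_n)\to\alpha$ by the first paragraph, the limit satisfies $v_\infty(\theta_\star)=\alpha=\alpha\Phi(\theta_\star)$. Because $\nabla\Phi(\theta_\star)\neq 0$, the strong comparison principle for the $p$-Laplacian forces $v_\infty\equiv\alpha\Phi$ in a neighbourhood of $\theta_\star$; the contact set $\{v_\infty=\alpha\Phi\}$ is therefore open, and it is closed by continuity, so by connectedness of $\R^N\setminus\{0\}$ (for $N\geq 2$) one concludes $v_\infty\equiv\alpha\Phi$.

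Finally, reading the uniform convergence $v_n\to\alpha\Phi$ on $\partial B_1$ amounts to $v(y)|y|^{(N-p)/(p-1)}\to\alpha$ uniformly as $|y|\to 0$, so $r^{(N-p)/(p-1)}m(r)\to\alpha$; by the non-decreasing monotonicity from the first paragraph, this forces $r^{(N-p)/(p-1)}m(r)\geq\alpha$ for every $r>0$, i.e.\ $v\geq\alpha\Phi$. Together with the reverse inequality $v\leq\alpha\Phi$ this yields $v\equiv\alpha\Phi$, which is the claim. The main obstacle is the blow-up/contact step and the invocation of the strong comparison principle in the $C^{1,\alpha}$ class for $p\neq 2$; all the remaining ingredients reduce to Harnack's inequality and comparison with the explicit one-parameter family of radial $p$-harmonic profiles on annuli.
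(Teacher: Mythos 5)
Your argument is correct, but it follows a genuinely different path from the paper's. The paper proves radial symmetry of $v$ directly via the moving plane method: one reflects across hyperplanes $T_\lambda$ for $\lambda<0$, tests the weak formulation with $(v-v_\lambda-\varepsilon)^+\chi_{\Sigma_\lambda}$, uses that the support of this test function avoids both $0$ and its reflected point, and concludes $v\le v_\lambda$ in $\Sigma_\lambda$ for every direction; radiality then reduces the equation to the ODE $(|v'|^{p-2}v'\,r^{N-1})'=0$, which (after using V\'azquez's strong maximum principle to rule out $v'=0$) integrates to the fundamental solution. You instead run a barrier-and-blow-up scheme: Harnack plus comparison with the explicit one-parameter family $a+b\,|x|^{-(N-p)/(p-1)}$ on annuli gives monotonicity of $r^{(N-p)/(p-1)}M(r)$ and $r^{(N-p)/(p-1)}m(r)$ and the two-sided bound $c\Phi\le v\le C\Phi$; blow-up at $0$ at a maximum point produces a contact between a subsequential limit $v_\infty$ and $\alpha\Phi$; the strong comparison principle, valid at any contact point because $\nabla\Phi\neq 0$ away from the origin, plus an open-closed argument in the connected set $\R^N\setminus\{0\}$ ($N\ge 2$ since $p<N$), upgrades the contact to identity $v_\infty\equiv\alpha\Phi$; finally, monotonicity of $r^{(N-p)/(p-1)}m(r)$ transfers this back to $v\ge\alpha\Phi$, closing the sandwich. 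Both routes carry real machinery — yours relies on Harnack and the strong comparison principle for the degenerate operator (which is applicable here precisely because $\nabla\Phi$ never vanishes on $\R^N\setminus\{0\}$), while the paper's relies on a careful localization of the moving-plane test function away from the origin and its reflection and a regularity bootstrap on $\{\nabla v\neq 0\}$ to justify the ODE reduction. Your scheme bypasses proving radiality as an intermediate step and identifies the profile directly; the paper's scheme avoids blow-up analysis and the strong comparison principle in exchange for the ODE integration. One small presentational point: in the last paragraph, the cleanest way to pass from $v_n\to\alpha\Phi$ on $\partial B_1$ (along your chosen sequence) to $\lim_{r\to 0^+}r^{(N-p)/(p-1)}m(r)=\alpha$ is to invoke the monotonicity of $r\mapsto r^{(N-p)/(p-1)}m(r)$, which turns sequential convergence to $\alpha$ into full convergence without having to argue about all subsequential blow-up limits.
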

\begin{proof}
We start showing that $v$ is radial by exploiting the moving plane technique. To do this it is convenient to fix the direction $\nu=e_1$ and set
\[
w_\lambda\,:=\,\Big( v(x)-v_\lambda(x)-\varepsilon\Big)^+\,\chi_{\Sigma_\lambda}
\]
for $\lambda<0$ and $\varepsilon>0$ (small). By \eqref{limarm} it follows that
\[
supp \,w_\lambda \subset\subset \Sigma_\lambda\setminus 0_\lambda
\]
where $0_\lambda = (-2\lambda,x')$ is the reflected point of the origin. Consequently, since $v$ and $v_\lambda$ are $p$-harmonic
in $\mathbb{R}^N\setminus\{0\,,\,0_\lambda\}$, exploiting also \eqref{eq:inequalities},
we can infer that
\begin{equation}\nonumber
\begin{split}
C_1&\int_{\Sigma_\lambda\cap supp\, w_\lambda }\Big( |\nabla v|+ |\nabla v_{\lambda}|\Big)^{p-2}|\nabla (v-v_{\lambda})|^2 dx\\
&\leq\int_{\Sigma_\lambda\cap supp\, w_\lambda }\langle |\nabla v|^{p-2}\nabla v- |\nabla v_{\lambda}|^{p-2}\nabla v_{\lambda},\nabla (v-v_{\lambda})\rangle dx\\
&=\int_{\Sigma_\lambda }\langle |\nabla v|^{p-2}\nabla v- |\nabla v_{\lambda}|^{p-2}\nabla v_{\lambda},\nabla w_\lambda\rangle dx\\
&=0
\end{split}
\end{equation}
showing  that $w_\lambda=0$ in $\Sigma_\lambda$, namely $v\leq v_\lambda+\varepsilon$ in $\Sigma_\lambda$. Since $\lambda$ and $\varepsilon>0$ can be arbitrary chosen, we deduce that
\[
v\leq v_\lambda\qquad \text{in}\,\,\Sigma_\lambda\qquad \text{for any}\,\, \lambda<0\,.
\]
Repeating the argument in the $(-e^1)$-direction we deduce that $v$ is symmetric in the  $e^1$-direction and monotone nondecreasing in $e^1$-direction in $\Sigma_0$. It is now easy to observe that the same procedure can be performed in any direction $\nu\in S^{N-1}$ to deduce that $u$ is radial and radially nonincreasing. We use the notation $$ u=u(r)\,.$$
This allows to exploit the strong maximum principle (see \cite{V}) and get that $u'(r)\neq 0$ for $r>0$. In particular we have that
 \[
u'(r)<0\qquad\text{for}\,\,r>0\,.
\]
 This follows by the strong maximum principle applied to the derivatives of $u$,  see e.g. \cite{PSB} recalling that the $p$-Laplace operator is no more degenerate in the set $\{\nabla u\neq 0\}$.
By standard regularity theory, since  we know that $\{\nabla u=0\}=\emptyset$, it follows that $u\in C^{2,\alpha}_{loc}(\mathbb{R}^N\setminus\{0\})$ and  $(|u'|^{p-2}u'r^{N-1})'=0$. Equivalently, since $u'(r)<0$ for $r>0$, we have that $((-u')^{p-1}r^{N-1})'=0$ and therefore
\[
(-u')^{p-1}r^{N-1}=c\,.
\]
The proof of the result follows now
integrating and exploiting \eqref{limarm}.
\end{proof}
We recall now the result by J. V\'{e}tois. It has been showed in fact in \cite[Theorem 1.1]{VETOIS} that, under our assumptions (some more general estimates are also considered in \cite{VETOIS}), it holds that
\begin{equation}\label{vet}
\bar c(1+|x|^{\frac{N-p}{p-1}})^{-1}\leq u(x)\leq \bar C(1+|x|^{\frac{N-p}{p-1}})^{-1}\quad\text{and}\quad |\nabla u (x)|\leq \bar C(1+|x|^{\frac{N-1}{p-1}})^{-1}\,.
\end{equation}
In this section we provide the corresponding lower bound for the decay rate of $|\nabla u|$. Namely we have the following:
\begin{theorem}\label{cjvbmcnvbmvbmvbkghkg}
Let $1<p<N$ and let $u$ be a solution to  $\mathcal{P}^*$. Then there exist a radius $R_0>0$ and a constant $\tilde C>0$ such that
\begin{equation}\label{sciunzi}
 |\nabla u (x)|\geq     \frac{\tilde C}{|x|^{\frac{N-1}{p-1}}}  \qquad{in }\quad \mathbb{R}^N\setminus\{B_{R_0}\}\,.
\end{equation}
\end{theorem}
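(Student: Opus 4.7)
The plan is to argue by contradiction via a blow-down analysis at infinity and then invoke Theorem \ref{thmarmonic} to identify the resulting profile. Suppose the conclusion fails: there exists a sequence $x_n\in\R^N$ with $R_n:=|x_n|\to\infty$ such that $R_n^{(N-1)/(p-1)}|\nabla u(x_n)|\to 0$. I will construct a nontrivial $p$-harmonic blow-down limit of $u$ along this sequence and read off a contradiction from the explicit form of its gradient.

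For each $n$, introduce the natural rescaling adapted to the fundamental $p$-harmonic profile,
\[
v_n(y) := R_n^{\frac{N-p}{p-1}}\,u(R_n y),\qquad y\in\R^N\setminus\{0\}.
\]
A direct chain-rule computation shows that $v_n$ solves weakly
\[
-\Delta_p v_n = R_n^{-\frac{p}{p-1}}\,v_n^{p^*-1}\qquad\text{in }\R^N\setminus\{0\},
\]
and that $\nabla v_n(y)=R_n^{(N-1)/(p-1)}\nabla u(R_n y)$; in particular $|\nabla v_n(x_n/R_n)|=R_n^{(N-1)/(p-1)}|\nabla u(x_n)|\to 0$ by hypothesis. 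The two-sided bound \eqref{vet} implies that on any annulus $\{\varepsilon\leq |y|\leq M\}$, for $n$ large, the family $\{v_n\}$ is uniformly bounded from above and uniformly bounded away from $0$ by positive constants depending only on $\varepsilon$ and $M$.

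The interior $C^{1,\alpha}$ estimates for the $p$-Laplacian \cite{Di,Te,T} then give, along a subsequence, locally uniform $C^1$-convergence $v_n\to v_\infty$ on $\R^N\setminus\{0\}$. Since $R_n^{-p/(p-1)}\to 0$ and $v_n^{p^*-1}$ is locally bounded, passing to the limit in the weak formulation shows that $v_\infty$ is $p$-harmonic in $\R^N\setminus\{0\}$; the estimate \eqref{vet} transfers to $v_\infty$ and yields $c|y|^{-(N-p)/(p-1)}\leq v_\infty(y)\leq C|y|^{-(N-p)/(p-1)}$, so that $v_\infty(y)\to\infty$ as $y\to 0$ and $v_\infty(y)\to 0$ as $|y|\to\infty$. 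Theorem \ref{thmarmonic} now forces $v_\infty(y)=\alpha|y|^{-(N-p)/(p-1)}$ for some $\alpha>0$, hence $|\nabla v_\infty(y)|=\alpha\,\tfrac{N-p}{p-1}\,|y|^{-(N-1)/(p-1)}$. Extracting a further subsequence so that $x_n/R_n\to y_\infty\in S^{N-1}$, the $C^1$-convergence delivers $|\nabla v_n(x_n/R_n)|\to|\nabla v_\infty(y_\infty)|=\alpha(N-p)/(p-1)>0$, contradicting the hypothesis. The main subtlety is the construction of this nontrivial blow-down limit: the upper half of \eqref{vet} supplies the compactness needed to apply the $C^{1,\alpha}$ estimates, while the matching lower half is precisely what prevents the limit from degenerating to zero and thereby ensures $\alpha>0$.
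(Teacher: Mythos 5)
Your proposal is correct and follows essentially the same blow-down strategy as the paper: rescale $u$ by $R_n^{(N-p)/(p-1)}$, use the a priori bounds \eqref{vet} and interior $C^{1,\alpha}$ estimates to extract a locally $C^1$-convergent subsequence on $\R^N\setminus\{0\}$, identify the $p$-harmonic limit as the fundamental solution via Theorem \ref{thmarmonic}, and then contradict the vanishing of the rescaled gradient at a point of $S^{N-1}$. The only cosmetic difference is that the paper organizes the compactness step through an explicit diagonal argument over annuli $B_{j}\setminus B_{1/j}$, whereas you state the locally uniform convergence directly; the substance is identical.
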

\begin{proof}
Assume by contradiction that there exist sequences of radii $R_n$  and points $x_n$ with $R_n$ tending to infinity as $n$ tends to infinity and
$|x_n|=R_n$, such that
\begin{equation}\label{contradiction}
 |\nabla u (x_n)|\leq     \frac{\theta_n}{|R_n|^{\frac{N-1}{p-1}}}  \qquad{with }\quad \theta_n\underset{n\rightarrow \infty}{\longrightarrow}\,0\,.
\end{equation}
For $0<a<A$ fixed we set
\begin{equation}\nonumber
 w_{R_n}(x)\,:=\, R_n^{\frac{N-p}{p-1}}u(R_n\,x)\,.
\end{equation}
By \eqref{vet} (relabeling the constants and for $n$ large), it follows that
\begin{equation}\nonumber
\frac{\bar c}{A^{\frac{N-p}{p-1}}}\leq
 w_{R_n}(x)\leq \frac{\bar C}{a^{\frac{N-p}{p-1}}}
 \qquad\text{in }\quad \overline{B_A\setminus B_a}
\end{equation}
and in particular
\begin{equation}\label{jthvuvidasdasfasadf}
\begin{split}
&w_{R_n}\leq\frac{\bar C}{A^{\frac{N-p}{p-1}}}\qquad\text{on}\quad\partial B_{A}\\
&w_{R_n}\geq\frac{\bar c}{a^{\frac{N-p}{p-1}}}\qquad\text{on}\quad\partial B_{a}\,.
\end{split}
\end{equation}
Therefore, for $a,A$ fixed, it follows that $w_{R_n}$ is uniformly bounded in $L^\infty(B_A\setminus B_a)$ and hence, by  \cite{Di,mingione,Li,Te,T}, it is also uniformly bounded in $C^{1,\alpha}(K)$, $0<\alpha<1$, for any compact set $K\subset B_A\setminus B_a$. We agree that $a,A$ are redefined  so that the $C^{1,\alpha}$ estimates holds in  the closure of $B_A\setminus B_a$.
Up to subsequences we have that
\begin{equation}\label{nvbcbmvvmxmvbmvbxm}
 w_{R_n}(x)\overset{C^{1,\alpha'}}{\rightarrow}w_{a,A}
 \qquad\text{in}\quad B_A\setminus B_a\,.
\end{equation}
for $0<\alpha'<\alpha$.
Exploiting the fact that
\[
-\Delta_p w_{R_n}\,=\,\frac{1}{R_n^{\frac{p}{p-1}}}\, w_{R_n}^{p^*-1}
\]
we deduce that
\begin{equation}\label{dfbkvkkxvb}
-\Delta_p w_{a,A}\,=\,0 \qquad\text{in}\quad B_A\setminus \overline{B_a}\,.
\end{equation}
Now, for $j\in\mathbb{N}$, we take
\[
a_j\,=\, \frac{1}{j}\qquad\text{and}\qquad A_j=j
\]
and we construct $w_{a_j,A_j}$ as above.
 Letting $j$ tends to infinity and performing a standard diagonal process, we construct a limiting profile $w_\infty$ so that
\[
w_\infty\,\equiv\,w_{a_j,A_j}\qquad\text{in}\quad B_{A_j}\setminus B_{a_j}\,.
\]
In particular, by \eqref{dfbkvkkxvb}, it follows that
\begin{equation}\label{dfbkvkklrjbyypb xvb}
-\Delta_p w_{\infty}\,=\,0 \qquad\text{in}\quad \mathbb{R}^N\setminus\{0\}\,.
\end{equation}
By \eqref{jthvuvidasdasfasadf}, that holds with $a=a_j$ and $A=A_j$, it follows that
 \begin{equation}\nonumber
\underset{|x|\rightarrow \infty}{\lim}\, w_{\infty}(x)=0\qquad\text{and}\qquad\underset{|x|\rightarrow 0}{\lim}\, w_{\infty}(x)=\infty
\end{equation}
so that Theorem \ref{thmarmonic} can be applied to deduce that
\begin{equation}\nonumber
w_{\infty}(x)\,=\, \frac{\alpha}{|x|^{\frac{N-p}{p-1}}}\qquad \text{for some}\,\,\alpha\in\mathbb{R}^+\,.
\end{equation}
Let now $x_n$ be as in \eqref{contradiction} and set
\[
y_n=\frac{x_n}{R_n}\,.
\]
Then, by \eqref{contradiction}, it follows that $|\nabla w_{R_n}(y_n)|$ tends to zero as $n$ tends to infinity. Up to subsequences, since $|y_n|=1$, we have that $y_n$ tends to $\bar y\in\partial B_1$.  Consequently, by the uniform convergence of the gradients that follows by \eqref{nvbcbmvvmxmvbmvbxm}, we have that
\[
\nabla w_\infty (\bar y)=0.
\]
This is a contradiction since the fundamental solution has no critical points and the proof is done.
\end{proof}

\section{Radial symmetry of the solutions}\label{section3}
Now we can prove the symmetry (and monotonicity) result. Since the case $1<p<2$ is already known \cite{AdvS,VETOIS}, we only consider the case $p>2$.
We have the following:
\begin{theorem}\label{hfgdhgdhghfghdkgfvks}
Let $2<p<N$ and let $u$ be a solution to $\mathcal{P}^*$.  Then $u$ is radial and radially decreasing about some point $x_0\in\mathbb{R}^N$. Assuming up to translations that $x_0=0$ it follows that $u=u(r)$ with  $u'(r)<0$ for $r>0$.
\end{theorem}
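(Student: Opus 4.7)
The plan is to run the moving plane method in the fixed direction $e^1$; by the rotation invariance of $\mathcal{P}^*$ the same procedure works for every $\nu\in S^{N-1}$, and intersecting the resulting symmetry hyperplanes produces a unique center $x_0$. Translating so that $x_0=0$ then gives $u=u(r)$, while strict monotonicity $u'(r)<0$ for $r>0$ follows from the strong comparison principle applied on $\{\nabla u\neq 0\}$, whose complement is bounded by Theorem \ref{cjvbmcnvbmvbmvbkghkg}.

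To initiate the procedure (showing $\Lambda\neq\emptyset$), I would test the weak form of
\begin{equation*}
-\Delta_p u+\Delta_p u_\lambda=u^{p^*-1}-u_\lambda^{p^*-1}\qquad\text{on }\Sigma_\lambda
\end{equation*}
against $w_\lambda:=(u-u_\lambda)^+$ (after the standard cutoff at infinity, admissible since $u,u_\lambda\in\mathcal{D}^{1,p}(\mathbb{R}^N)$). Combining \eqref{eq:inequalities} on the left with the mean-value estimate $u^{p^*-1}-u_\lambda^{p^*-1}\leq (p^*-1)u^{p^*-2}w_\lambda$ on the right, and setting $\rho:=|\nabla u|+|\nabla u_\lambda|$, one obtains
\begin{equation*}
C_1\int_{\Sigma_\lambda}\rho^{p-2}|\nabla w_\lambda|^2\,dx\leq (p^*-1)\int_{\Sigma_\lambda}u^{p^*-2}w_\lambda^2\,dx.
\end{equation*}
The two-sided decay in \eqref{vet} together with Theorem \ref{cjvbmcnvbmvbmvbkghkg} give the asymptotic
\begin{equation*}
\frac{u^{p^*-2}|x|^2}{\rho^{p-2}}=O\bigl(|x|^{-p/(p-1)}\bigr)\to 0\qquad\text{as }|x|\to\infty,
\end{equation*}
so invoking Hardy's inequality and the weighted Poincar\'e inequality of \cite{DS1}, of the form $\int\rho^{p-2}w^2/|x|^2\leq C\int\rho^{p-2}|\nabla w|^2$, the right-hand side above is absorbable into the left-hand side whenever $\Sigma_\lambda$ lies outside a sufficiently large ball. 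This holds for $\lambda\ll 0$ (since $\Sigma_\lambda\subset\{|x|>|\lambda|\}$), yielding $w_\lambda\equiv 0$ on $\Sigma_\lambda$. Hence $\Lambda\neq\emptyset$; positivity and decay of $u$ force $\lambda_0:=\sup\Lambda<\infty$.

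The core step is $u\equiv u_{\lambda_0}$ on $\Sigma_{\lambda_0}$. Arguing by contradiction, take $\lambda=\lambda_0+\delta$ with $\delta>0$ small, so that $w_\lambda\not\equiv 0$. The same energy inequality holds but with integrals over $\{w_\lambda>0\}\cap\Sigma_\lambda$, a set whose weighted measure tends to zero as $\delta\searrow 0$: in $\Sigma_{\lambda_0}$ the strong comparison principle at $\lambda_0$ forces $u<u_{\lambda_0}$ on $\{\nabla u\neq 0\}$ (the compact degenerate set $\{\nabla u=0\}\subset B_{R_0}$ being handled by the weighted estimates of \cite{DS1}), while in the narrow slab $\{\lambda_0<x_1<\lambda_0+\delta\}$ everything is small by continuity. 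Hence the absorption constant becomes $o_\delta(1)$, forcing $w_\lambda\equiv 0$ in contradiction with the maximality of $\lambda_0$. The principal obstacle is precisely this extension of the comparison across the compact critical set where $\rho^{p-2}$ degenerates (a non-issue in the semilinear case $p=2$); the weighted Poincar\'e inequality from \cite{DS1} is the essential new analytic input, and once the energy estimate closes in every direction, radial symmetry and the monotonicity claim follow as outlined.
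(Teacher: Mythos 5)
Your proposal follows essentially the same route as the paper: run the moving plane method, initiating it via a Hardy-type absorption for $|\lambda|$ large (the crucial input being the gradient lower bound of Theorem \ref{cjvbmcnvbmvbmvbkghkg}, which controls the degenerate factor $\rho^{p-2}$ precisely when $p>2$), and closing it at $\lambda_0$ by driving the absorption constant to zero on the set $\{w_\lambda>0\}$. The structure is the correct one.

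Two points of imprecision are worth flagging. First, the estimate you write, $\int\rho^{p-2}w^2/|x|^2\leq C\int\rho^{p-2}|\nabla w|^2$, is not the weighted Poincar\'e inequality of \cite{DS1} (which is an inequality on bounded domains with constant tending to zero with the measure of the domain). In Step~1 the paper uses the power-weight Hardy inequality $\int_{\Sigma_\lambda}|x|^{s-2}w^2\leq C\int_{\Sigma_\lambda}|x|^{s}|\nabla w|^2$ with $s=-\frac{N-1}{p-1}(p-2)$, and then replaces $|x|^{s}$ by $\tilde C^{-(p-2)}|\nabla u|^{p-2}$ via \eqref{sciunzi} (here $p>2$ is essential); your $O(|x|^{-p/(p-1)})$ computation does give the right exponent $\beta^*=p/(p-1)$, but the two comparisons should be kept separate since $\rho^{p-2}$ is comparable to $|x|^{s}$ only far from the origin. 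Second, at $\lambda_0$ the paper decomposes $\Sigma_{\lambda_0+\varepsilon}$ into three explicit pieces: a far-field region $B_R^\varepsilon$ treated exactly as in Step~1 by Hardy, a thin slab $S_\delta^\varepsilon$ where the weighted Poincar\'e inequality of \cite{DS1} is used (its constant small because $|S_\delta^\varepsilon|$ is small), and a compact set $K_\delta$ where $(u-u_{\lambda_0+\varepsilon})^+\equiv 0$ by the strong comparison principle of \cite{DS2} and uniform continuity. Your sketch conflates the far-field and the slab into ``a set whose weighted measure tends to zero,'' but small weighted measure alone does not absorb the far-field integral — one needs the Hardy mechanism there again. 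Also, \cite[Theorem 1.4]{DS2} gives the alternative $u<u_{\lambda_0}$ or $u\equiv u_{\lambda_0}$ on all of $\Sigma_{\lambda_0}$ directly, without first excising the critical set $\{\nabla u=0\}$. These are gaps of precision rather than of idea, but they would need to be filled to make the argument complete.
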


\begin{proof}
We carry out the moving plane technique in the $e^1$-direction starting from the left. More precisely we start with:\\

\noindent \emph{Step 1.}
{ \it Let us prove that $\Lambda \neq \emptyset $ for $\lambda<0$ with $|\lambda|$ large.} \\
It is easy to see that it is
possible to take the function $ (u-u_\lambda)^+\chi_{\Sigma_\lambda}$ as a
test function in the distributional formulation of  $-\Delta_p u =u^{p^*-1}$. Recall that $u_\lambda$ also fulfills $-\Delta_p u_\lambda =u_\lambda^{p^*-1}$.
This follows by standard density arguments and exploiting e.g. \eqref{vet} but the fact that $u,u_\lambda\in L^{p^*}(\mathbb{R}^N)$ is enough as it was shown in \cite{AdvS}.
Therefore we have that:
\begin{equation}\label{eq:secondaaaa}
 \int_{\Sigma_\lambda}|\nabla u|^{p-2}\langle \nabla u,\nabla (u-u_{\lambda})^+\rangle dx=\int_{\Sigma_\lambda}u^{p^*-1}(u-u_{\lambda})^+ dx\,.
\end{equation}
Since  $u_{\lambda}$ satisfies the same equation, analogously we get
\begin{equation}\label{eq:secondaaaaA}
\int_{\Sigma_\lambda}|\nabla u_{\lambda}|^{p-2}\langle \nabla u_{\lambda},\nabla (u-u_{\lambda})^+\rangle dx=\int_{\Sigma_\lambda}{u_{\lambda}}^{p^*-1}(u-u_{\lambda})^+dx\,.
\end{equation}
Subtracting we get
\begin{equation}\nonumber
\int_{\Sigma_\lambda}\langle |\nabla u|^{p-2}\nabla u- |\nabla u_{\lambda}|^{p-2}\nabla u_{\lambda},\nabla (u-u_{\lambda})^+\rangle dx=\int_{\Sigma_\lambda}(u^{p^*-1}-{u_{\lambda}}^{p^*-1}) (u-u_{\lambda})^+ dx
\end{equation}
and, by \eqref{eq:inequalities}, we have
\begin{equation}\label{eq:madriddddd}
\int_{\Sigma_\lambda}(|\nabla u|+|\nabla u_{\lambda}|)^{p-2}|\nabla(u-u_{\lambda})^+|^2dx\leq \frac{1}{C_1}\int_{\Sigma_\lambda}(u^{p^*-1}-u_{\lambda}^{p^*-1}) (u-u_{\lambda})^+ dx\,.
\end{equation}

Using Lagrange Theorem,   the fact that $u^{p^*-1}$ is convex in $u$ and the fact that $u\geq u_\lambda$ in the support of  $(u-u_{\lambda})^+$, we obtain
\begin{equation}\label{eq:batiado}
\begin{split}
&\int_{\Sigma_\lambda}(u^{p^*-1}-u_{\lambda}^{p^*-1}) (u-u_{\lambda})^+ dx\leq (p^*-1)\int_{\Sigma_\lambda}u^{p^*-2} [(u-u_{\lambda})^+]^2 dx\\
&\leq \bar C (p^*-1)\int_{\Sigma_\lambda}\frac{1}{|x|^{(p^*-2)\frac{N-p}{p-1}}} [(u-u_{\lambda})^+]^2 dx\\
\end{split}
\end{equation}
where we also exploited \eqref{vet}. Now, in order to apply Hardy's inequality, we set
\[
s\,:=\,-\frac{N-1}{p-1}(p-2)\qquad\text{and}\qquad \beta^*\,:=\,(p^*-2)\frac{N-p}{p-1}+s-2\,.
\]
It is easy to see that $\beta^*$ is positive and that $s>2-N$ so that we are in position to apply  Hardy's inequality.
We refer the reader to the version of \cite[Lemma 2.3]{DR} where also a self contained proof is available.
In particular by
\eqref{eq:madriddddd} and \eqref{eq:batiado}, observing that $|x|>|\lambda|$ in $\Sigma_\lambda$, we get
\begin{equation}\label{eq:batiadohfhfhfh}
\begin{split}
&\int_{\Sigma_\lambda}(|\nabla u|+|\nabla u_{\lambda}|)^{p-2}|\nabla(u-u_{\lambda})^+|^2dx \\
&\leq \frac{\bar C}{C_1} (p^*-1) \frac{1}{|\lambda|^{\beta^*}}\int_{\Sigma_\lambda}|x|^{s-2}[(u-u_{\lambda})^+]^2 dx\\
&\leq \frac{\bar C}{C_1} (p^*-1) \frac{1}{|\lambda|^{\beta^*}}\left(\frac{2}{N+s-2}\right)^2
\int_{\Sigma_\lambda}|x|^s|\nabla(u-u_{\lambda})^+|^2dx\\
&\leq \frac{\bar C}{C_1\,\tilde C^{p-2}} (p^*-1) \frac{1}{|\lambda|^{\beta^*}}\left(\frac{2}{N+s-2}\right)^2
\int_{\Sigma_\lambda}|\nabla u|^{p-2}|\nabla(u-u_{\lambda})^+|^2dx\\
&\leq \frac{\bar C}{C_1\,\tilde C^{p-2}} (p^*-1) \frac{1}{|\lambda|^{\beta^*}}\left(\frac{2}{N+s-2}\right)^2
\int_{\Sigma_\lambda}(|\nabla u|+|\nabla u_{\lambda}|)^{p-2}|\nabla(u-u_{\lambda})^+|^2dx\\
\end{split}
\end{equation}
where $\tilde C$ is given by Theorem \ref{cjvbmcnvbmvbmvbkghkg}. Let us emphasize that here, since $p-2>0$, it is crucial the use of
Theorem \ref{cjvbmcnvbmvbmvbkghkg} that can be avoided on the contrary in the case $1<p<2$ since the reverse inequality is needed if $p-2<0$ .
For $|\lambda|$ large such that
\[
 \frac{\bar C}{C_1\,\tilde C^{p-2}} (p^*-1) \frac{1}{|\lambda|^{\beta^*}}\left(\frac{2}{N+s-2}\right)^2<1
\]
it follows that \eqref{eq:batiadohfhfhfh} provides a contradiction unless $(u-u_{\lambda})^+=0$. Therefore
\[
u\leq u_\lambda\qquad\text{in}\quad\Sigma_\lambda\,.
\]
Note now that, by the strong comparison principle \cite[Theorem 1.4]{DS2} we deduce that, either
$u<u_\lambda$ in $\Sigma_\lambda$, or $u=u_\lambda$ in $\Sigma_\lambda$. If $u=u_\lambda$ in $\Sigma_\lambda$ we have a symmetry hyperplane
and the proof follows performing the moving plane procedure in the other directions. If else
$u<u_\lambda$ in $\Sigma_\lambda$ we let $\lambda_0$ be defined by
\eqref{eq:suplambda} that is well defined since we showed that $\Lambda\neq\emptyset$. Since $u$ vanishes at infinity it follows that $\lambda_0$ is finite. Furthermore, by continuity, we have that
$u\leq u_{\lambda_0}$ in $\Sigma_{\lambda_0}$.\\

\noindent \emph{Step 2.}
{ \it Let us prove that $u=u_{\lambda_0}$ in $\Sigma_{\lambda_0}$.} \\
Assume by contradiction that $u$ does not coincide with $u_{\lambda_0}$ in $\Sigma_{\lambda_0}$.
 Again, by the strong comparison principle \cite[Theorem 1.4]{DS2}, we deduce that
$u<u_{\lambda_0}$ in $\Sigma_{\lambda_0}$.
Let us consider $R>0$, that later on will be fixed large, and  let us set
\[
B_R^\varepsilon\,:=\, \mathcal C (B_R)\cap \Sigma_{\lambda_0+\varepsilon}\quad\text{and}\quad
S_\delta^\varepsilon\,:=\, \left(\Sigma_{\lambda_0+\varepsilon}\setminus\Sigma_{\lambda_0-\delta}\right)\cap B_R
\quad\text{and}\quad
K_\delta\,:=\, \overline{B_R\cap \Sigma_{\lambda_0-\delta}}\,,
\]
where  $\mathcal C (\cdot)$ indicates the complementary of a set. It is clear that
\[
\Sigma_{\lambda_0+\varepsilon}=B_R^\varepsilon\cup S_\delta^\varepsilon\cup K_\delta\,.
\]
For $\delta >0$ we have that  $u_{\lambda_0}>u$ in $K_\delta$.  Therefore,
since $K_\delta$ is compact, there exists a small $\bar\varepsilon>0$ such that
 $$u_{\lambda_0+\varepsilon}>u$$ in $K_\delta$ for any $0\leq\varepsilon\leq\bar\varepsilon$. For such values of $\varepsilon$ we
argue as above  taking $(u-u_{\lambda_0+\varepsilon})^+\chi_{\Sigma_{\lambda_0+\varepsilon}}$ as test function.
In the following  the integral in $B_R^\varepsilon$ is estimated arguing exactly as in \eqref{eq:batiadohfhfhfh} and recalling that  Hardy's inequality only requires the functions to vanish at infinity. We have:
\begin{equation}\label{eq:madridddddthfgjhdfhd}
\begin{split}
&\int_{\Sigma_{\lambda_0+\varepsilon}}(|\nabla u|+|\nabla u_{\lambda_0+\varepsilon}|)^{p-2}|\nabla(u-u_{\lambda_0+\varepsilon})^+|^2dx\leq \frac{1}{C_1}\int_{\Sigma_{\lambda_0+\varepsilon}}(u^{p^*-1}-u_{\lambda_0+\varepsilon}^{p^*-1}) (u-u_{\lambda_0+\varepsilon})^+ dx\\
&=\frac{1}{C_1}\int_{B_R^\varepsilon}(u^{p^*-1}-u_{\lambda_0+\varepsilon}^{p^*-1}) (u-u_{\lambda_0+\varepsilon})^+ dx\,+
\, \frac{1}{C_1}\int_{S_\delta^\varepsilon}(u^{p^*-1}-u_{\lambda_0+\varepsilon}^{p^*-1}) (u-u_{\lambda_0+\varepsilon})^+ dx\,\\
&\leq \frac{1}{C_1}\int_{B_R^\varepsilon}(u^{p^*-1}-u_{\lambda_0+\varepsilon}^{p^*-1}) (u-u_{\lambda_0+\varepsilon})^+ dx\,+
\, \frac{(p^*-1)\|u\|_\infty^{p^*-2}}{C_1}\int_{S_\delta^\varepsilon}\big[ (u-u_{\lambda_0+\varepsilon})^+\big]^2 dx\\
&\leq \frac{\bar C}{C_1\,\tilde C^{p-2}} (p^*-1) \frac{1}{|R|^{\beta^*}}\left(\frac{2}{N+s-2}\right)^2\int_{B_R^\varepsilon}(|\nabla u|+|\nabla u_{\lambda_0+\varepsilon}|)^{p-2}|\nabla(u-u_{\lambda_0+\varepsilon})^+|^2dx\\
&+\frac{(p^*-1)\|u\|_\infty^{p^*-2}}{C_1}C_p^2(S_\delta^\varepsilon)\int_{S_\delta^\varepsilon}(|\nabla u|+|\nabla u_{\lambda_0+\varepsilon}|)^{p-2}|\nabla(u-u_{\lambda_0+\varepsilon})^+|^2dx
\end{split}
\end{equation}
where $C_p(\Omega)$ is the Poincar\'{e}  constant for the weighted  Poincar\'{e} type inequality obtained in \cite[Theorem 3.2]{DS1}. Note that the weight considered in
\cite[Theorem 3.2]{DS1} is $|\nabla u|^{p-2}$ but we use here  the fact that, since $p>2$, we have that
$|\nabla u|^{p-2}\leq \big( |\nabla u|+|\nabla u_{\lambda_0+\varepsilon}|\big)^{p-2}$.

\noindent Now we take care of the variable parameters $R,\delta,\bar\varepsilon$. First we fix $R$ large so that
\[
\frac{\bar C}{C_1\,\tilde C^{p-2}} (p^*-1) \frac{1}{|R|^{\beta^*}}\left(\frac{2}{N+s-2}\right)^2<1\,.
\]
Then, recalling that $C_p(\Omega)$ goes to zero if the Lebesgue measure of $\Omega$ goes to zero, we chose $\delta$ small so that
\[
\frac{(p^*-1)\|u\|_\infty^{p^*-2}}{C_1}C_p^2(S_\delta^0)<\frac{1}{2}\,.
\]
Finally we take $\bar\varepsilon$ as above and so that, eventually reducing it, still we have
\[
\frac{(p^*-1)\|u\|_\infty^{p^*-2}}{C_1}C_p^2(S_\delta^\varepsilon)<1
\]
for any $0\leq\varepsilon\leq\bar\varepsilon$. With this choice of parameters, by \eqref{eq:madridddddthfgjhdfhd} and reassembling the integrals, we get

\begin{equation}\nonumber
\int_{\Sigma_{\lambda_0+\varepsilon}}(|\nabla u|+|\nabla u_{\lambda_0+\varepsilon}|)^{p-2}|\nabla(u-u_{\lambda_0+\varepsilon})^+|^2dx\leq
\int_{\Sigma_{\lambda_0+\varepsilon}}(|\nabla u|+|\nabla u_{\lambda_0+\varepsilon}|)^{p-2}|\nabla(u-u_{\lambda_0+\varepsilon})^+|^2dx
\end{equation}
that is a contradiction unless $\int_{\Sigma_{\lambda_0+\varepsilon}}(|\nabla u|+|\nabla u_{\lambda_0+\varepsilon}|)^{p-2}|\nabla(u-u_{\lambda_0+\varepsilon})^+|^2dx=0$. But in this case it occurs that $(u-u_{\lambda_0+\varepsilon})^+=0$
for any $0\leq\varepsilon\leq\bar\varepsilon$, that is a contradiction with the definition of $\lambda_0$. Consequently we have proved that necessarily
\[
u=u_{\lambda_0}\qquad\text{in}\quad\Sigma_{\lambda_0}.
\]
Furthermore $u<u_{\lambda}$ in $\Sigma_{\lambda}$ for any $\lambda<\lambda_0$ and consequently $u$ is monotone increasing in $\Sigma_{\lambda_0}$ so that $u_{x_1}\geq 0$ in $\Sigma_{\lambda_0}$. By the strong maximum principle for the linearized operator \cite[Theorem 1.2]{DS2} it follows that actually
$u_{x_1}> 0$ in $\Sigma_{\lambda_0}$.\\

\noindent \emph{Step 3.}
{ \it Conclusion.} \\
The symmetry result follows now in a standard way, see e.g. \cite{AdvS,DPR,GNN,GNN2},
considering  the $N$
linearly independent directions $e^i$ with $i=1,\ldots,N$ in $\R^N$.  Exploiting the moving plane technique exactly as above it follows in fact that
 $u$ is  symmetric about some point $x_0= \cap _{i=1}^N T_{\lambda _0 (e_i)}^{e_i}$
 which is the only critical point of $u$. Furthermore, by the moving plane procedure exploited in any direction $\nu\in S^{N-1}$ we finally get that
 $u$ is radial and radially decreasing. Assuming up to translations that $x_0=0$ we get that $u=u(r)$ with
  $u'(r)<0$ for $r>0$.

\end{proof}

\begin{proof}[\underline{Proof of Theorem \ref{radialtheorem}}]
Once that the radial symmetry of the solutions has been proved in Theorem \ref{hfgdhgdhghfghdkgfvks},
the proof of Theorem \ref{radialtheorem} follows directly by the classification of the radial solutions obtained  in \cite{bidaut,ClassGV}.
\end{proof}

\end{document}